\newtheorem{theorem}{Theorem}
\newtheorem{question}[theorem]{Question}
\newtheorem{conjecture}[theorem]{Conjecture}
\newenvironment{proof of claim}{\noindent\textbf{Proof of the claim.}}{\hfill{$\square$}\newline}
\theoremstyle{definition}
\theoremstyle{remark}
\numberwithin{equation}{section}
\begin{document}


\title[Connectifying a space by adding one point]{Connectifying a space by adding one point}

\author{M.R. Koushesh}

\address{Department of Mathematical Sciences, Isfahan University of Technology, Isfahan 84156--83111, Iran and School of Mathematics, Institute for Research in Fundamental Sciences (IPM), P.O. Box: 19395--5746, Tehran, Iran.}

\email{koushesh@cc.iut.ac.ir}


\subjclass[2010]{Primary 54D35, 54D05; Secondary 54D40.}


\keywords{One-point extension; One-point connectification; One-point compactification; Locally connected; Component.}

\begin{abstract}
It is a classical theorem of Alexandroff that a locally compact Hausdorff space has a one-point Hausdorff compactification if and only if it is non-compact. The one-point Hausdorff compactification is indeed obtained by adding the so called ``point at infinity.'' Here we consider the analogous problem of existence of a one-point connectification, and keeping analogy, we prove that a locally connected normal space has a one-point normal connectification if and only if it has no compact component.
\end{abstract}

\maketitle


\section{Introduction}

In 1924, P. Alexandroff proved in \cite{A} that a locally compact non-compact Hausdorff space $X$ can be compactified by adding a ``point at infinity,'' that is, there is a compact Hausdorff space $\alpha X=X\cup\{\infty\}$, where $\infty$ is not in $X$, which contains $X$ as a dense subspace. The analogous question (also attributed to P. Alexandroff) of determining Hausdorff spaces which can be connectified by adding one point remained however unsettled. The question indeed has motivated a significant amount of research so far, with perhaps the earliest serious one dating as far back as 1945. In 1945, B. Knaster proved in \cite{Kn} that a separable metrizable space has a separable metrizable one-point connectification if and only if it can be embedded in a separable metrizable connected space as a proper open subspace. Among recent results we mention that of M. Abry, J.J. Dijkstra, and J. van Mill in \cite{ADvM} which states that a separable metrizable space in which every component is open has a separable metrizable one-point connectification if and only if it has no compact component.

Here in this note we prove that a locally connected normal space has a one-point normal connectification if and only if it has no compact component. This may be considered as a dual to the theorem of Alexandroff that a locally compact Hausdorff space has a one-point Hausdorff compactification if and only if it is non-compact.

Recall that \textit{components} (also called \textit{connected components}) of a space $X$ are the maximal connected subsets of $X$ which partition $X$ into pairwise disjoint non-empty closed subsets. Components of locally connected spaces are open. A space $Y$ is called a \textit{one-point connectification} of a space $X$ if $Y$ is connected, contains $X$ as a dense subspace, and $Y\setminus X$ is a singleton. One-point compactifications are defined analogously. (See the standard text \cite{E} for further information.)

\section{The connectification theorem}

Let us first review the statement of the well known theorem of Alexandroff. This theorem (and its other equivalent forms) indeed motivates the statement of our theorem.

\begin{theorem}[Alexandroff]\label{KLFA}
A locally compact Hausdorff space has a one-point Hausdorff compactification if and only if it is non-compact.
\end{theorem}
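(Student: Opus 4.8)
The plan is to prove both implications directly, with the forward direction being a short topological observation and the reverse direction requiring an explicit construction. For necessity, suppose $X$ admits a one-point Hausdorff compactification $\alpha X=X\cup\{\infty\}$. Since $\alpha X$ is Hausdorff and $X$ is dense in it with $\infty\notin X$, I would argue that $X$ cannot be compact: a compact subset of a Hausdorff space is closed, so if $X$ were compact it would be closed in $\alpha X$, and a dense closed subset must coincide with the whole space, forcing $\infty\in X$, a contradiction. Hence $X$ is non-compact.

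For sufficiency, assume $X$ is locally compact Hausdorff and non-compact. I would form the set $\alpha X=X\cup\{\infty\}$ with $\infty$ a new point, and declare a subset $U\subseteq\alpha X$ to be open precisely when either $U$ is open in $X$, or $\infty\in U$ and $\alpha X\setminus U$ is a compact subset of $X$. The first task is to check that this family is a topology, which is routine: the key point is that compact subsets of the Hausdorff space $X$ are closed, so finite unions and arbitrary intersections of the prescribed ``neighbourhoods of infinity'' again have the required form.

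Next I would verify the three defining properties. Compactness of $\alpha X$ follows because any open cover must contain a member $U$ with $\infty\in U$; then $\alpha X\setminus U$ is compact and covered by finitely many of the remaining members, and adjoining $U$ gives a finite subcover. That $X$ sits as a subspace is immediate, since intersecting an open set of $\alpha X$ with $X$ yields either an open set of $X$ or the complement in $X$ of a compact (hence closed) set. Density of $X$ holds because the only open neighbourhoods of $\infty$ are complements of compact sets, and non-compactness of $X$ guarantees that each such complement meets $X$.

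The main obstacle --- and the only place local compactness is genuinely used --- is the Hausdorff property, specifically separating a point $x\in X$ from $\infty$. Here I would invoke local compactness to produce an open set $V$ and a compact set $K$ with $x\in V\subseteq K$; then $V$ and $\{\infty\}\cup(X\setminus K)$ are disjoint open sets of $\alpha X$ separating $x$ and $\infty$. Separation of two distinct points of $X$ reduces at once to the Hausdorffness of $X$ itself. Assembling these verifications shows that $\alpha X$ is a one-point Hausdorff compactification of $X$, which completes the equivalence.
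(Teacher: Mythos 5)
Your proof is correct, and it is the standard construction of the Alexandroff one-point compactification. Be aware, though, that the paper contains no proof of this statement to compare against: Theorem 1 is quoted as a classical result (with a citation to Alexandroff's 1924 paper) and serves only to motivate the connectification theorem. What is worth noting is that your construction is precisely the template that the paper's own Theorem 4 imitates: there the neighbourhoods of the added point $p$ are required to contain, for each component $C$ of $X$, a member of a maximal collection $\mathscr{D}(C)$ of closed subsets of $C$ with the finite intersection property and empty intersection, and these collections play exactly the role that the complements of compact sets play in your argument, with the verifications (topology axioms, density, connectedness/compactness, separation) running in close parallel. One phrasing slip in your write-up: to verify the topology axioms you must check \emph{arbitrary} unions and \emph{finite} intersections of neighbourhoods of $\infty$, not ``finite unions and arbitrary intersections''; the complements of these are, respectively, arbitrary intersections and finite unions of compact sets, and those are compact exactly because compact subsets of a Hausdorff space are closed, as you note. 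Read literally, your claim is even false: arbitrary intersections of neighbourhoods of $\infty$ need not be open --- for $X=\mathbb{R}$ the intersection of the sets $\{\infty\}\cup(\mathbb{R}\setminus[-n,n])$ over all $n$ is $\{\infty\}$, which is not open. This is a local misstatement rather than a gap, since the topology axioms never require arbitrary intersections; the rest of the argument, including the use of local compactness only for separating a point $x\in X$ from $\infty$, and of non-compactness only for density of $X$, is exactly right.
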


Observe that locally compact Hausdorff spaces are $T_{3\frac{1}{2}}$ (completely regular) and compact Hausdorff spaces are $T_4$ (normal). Therefore, the Alexandroff theorem can be reformulated in the following form.

\begin{theorem}[Alexandroff]\label{HFD}
Let $i=2,3,3\frac{1}{2},4$. A locally compact $T_i$-space has a one-point $T_i$-compactification if and only if it is non-compact.
\end{theorem}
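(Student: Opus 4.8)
The plan is to derive this purely from the classical Alexandroff theorem (Theorem \ref{KLFA}) together with the two separation-axiom facts recalled just above: every locally compact Hausdorff space is $T_{3\frac{1}{2}}$, and every compact Hausdorff space is $T_4$. The guiding observation is that for each $i\in\{2,3,3\frac{1}{2},4\}$ one has the implication $T_i\Rightarrow T_2$, so the hypothesis ``locally compact $T_i$'' always subsumes ``locally compact Hausdorff''; and conversely a compact Hausdorff space is automatically $T_4$, hence $T_i$ for every such $i$ along the chain $T_4\Rightarrow T_{3\frac{1}{2}}\Rightarrow T_3\Rightarrow T_2$.

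First I would dispose of the forward implication. Suppose $Y=X\cup\{p\}$ is a one-point $T_i$-compactification of $X$. Since $i\geq 2$, the space $Y$ is Hausdorff and $X$ is a dense proper subspace of it. If $X$ were compact it would be closed in $Y$, and being dense it would then equal $Y$, contradicting $Y\setminus X=\{p\}\neq\emptyset$. Hence $X$ is non-compact. This half uses nothing beyond Hausdorffness.

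For the reverse implication, let $X$ be a non-compact locally compact $T_i$-space. By the observation above, $X$ is locally compact Hausdorff and non-compact, so Theorem \ref{KLFA} yields a one-point Hausdorff compactification $Y=X\cup\{p\}$. The decisive step is then to upgrade the separation of $Y$ for free: since $Y$ is compact Hausdorff it is $T_4$, and therefore $T_i$ for every $i\in\{2,3,3\frac{1}{2},4\}$. Thus the very same space $Y$ is already a one-point $T_i$-compactification of $X$.

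I do not expect a genuine obstacle here; the entire content is carried by Theorem \ref{KLFA}. The only point worth emphasizing is that a single compactification --- the Alexandroff one --- simultaneously witnesses the conclusion for all four values of $i$ at once, precisely because compactness together with Hausdorffness already forces normality, so no construction beyond \ref{KLFA} and no further verification of separation axioms is needed.
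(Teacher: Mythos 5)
Your proposal is correct and is essentially the paper's own justification: the paper derives Theorem \ref{HFD} from Theorem \ref{KLFA} by precisely the observation you use, namely that locally compact Hausdorff spaces are $T_{3\frac{1}{2}}$ and that the resulting compact Hausdorff compactification is automatically $T_4$, hence $T_i$ for all four values of $i$. Your explicit treatment of the forward direction (dense plus compact plus Hausdorff forces $X=Y$) is a detail the paper leaves implicit, but it is the intended argument.
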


Keeping analogy, we expect the above theorem to hold true if we replace ``compactness'' by ``connectedness.'' We state this formally as a conjecture and prove it in the case when $i=4$. The case when $i=3\frac{1}{2}$ also holds and is proved in \cite{Ko}.

\begin{conjecture}\label{JHGH}
Let $i=2,3,3\frac{1}{2},4$. A locally connected $T_i$-space has a one-point $T_i$-connectification if and only if it has no compact component.
\end{conjecture}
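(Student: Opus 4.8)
The plan is to establish the case $i=4$ of Conjecture \ref{JHGH}: a locally connected $T_4$-space $X$ has a one-point $T_4$-connectification if and only if it has no compact component. Throughout, $Y=X\cup\{p\}$ (with $p\notin X$) denotes the space to be produced. The necessity (``only if'') direction is the easy one, and the argument in fact covers every $i$ in the list, since all the relevant separation axioms imply $T_2$. Suppose $Y$ is a one-point $T_i$-connectification and, toward a contradiction, that $C$ is a compact component of $X$. Because $Y$ is Hausdorff (in particular $T_1$), the singleton $\{p\}$ is closed, so $X=Y\setminus\{p\}$ is open in $Y$. Local connectedness makes $C$ open in $X$, hence open in $Y$; compactness of $C$ in the Hausdorff space $Y$ makes $C$ closed in $Y$. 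Thus $C$ is a non-empty proper clopen subset of the connected space $Y$, a contradiction.

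For sufficiency I would build the topology of $Y$ by prescribing the neighbourhood filter of $p$ through an ideal $\mathcal I$ of closed subsets of $X$, the ``small'' sets: declare $V\subseteq Y$ open exactly when $V\cap X$ is open in $X$ and, whenever $p\in V$, the set $X\setminus V$ belongs to $\mathcal I$. A routine verification shows this is a topology inducing the original topology on $X$, with $X$ dense provided $X\notin\mathcal I$. The three defining features of a one-point $T_4$-connectification then translate into conditions on $\mathcal I$. Density of $X$ together with the $T_1$ axiom amounts to $\mathcal I$ being a proper ideal that contains every singleton. Connectedness of $Y$ amounts to no non-empty clopen subset of $X$ lying in $\mathcal I$; since a clopen set meets each component in $\emptyset$ or the whole component, this is the same as requiring that no member of $\mathcal I$ contain an entire component. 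Finally, the separation that will drive normality amounts to the interior property: every member of $\mathcal I$ is contained in the interior of a member of $\mathcal I$. The crux is to produce an $\mathcal I$ enjoying all three.

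The construction spends the hypotheses as follows. Local connectedness makes every component $C_\alpha$ clopen, so a subset of $X$ is closed precisely when it meets each $C_\alpha$ in a closed set, and interiors may be computed component-wise. The absence of compact components makes each $C_\alpha$ a non-compact $T_4$-space, whence $\beta C_\alpha\setminus C_\alpha\neq\emptyset$; fix $q_\alpha$ there and put $\mathcal I_\alpha=\{B\subseteq C_\alpha \text{ closed}: q_\alpha\notin\mathrm{cl}_{\beta C_\alpha}B\}$. Then let $\mathcal I$ consist of all closed $B\subseteq X$ with $B\cap C_\alpha\in\mathcal I_\alpha$ for every $\alpha$. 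Each $\mathcal I_\alpha$ is a proper ideal (it omits $C_\alpha$, whose $\beta$-closure is all of $\beta C_\alpha$) containing the singletons, and the interior property for $\mathcal I_\alpha$ falls out of normality of the compact space $\beta C_\alpha$: separating the disjoint closed sets $\{q_\alpha\}$ and $\mathrm{cl}_{\beta C_\alpha}B$ by open sets with disjoint closures and tracing back to $C_\alpha$ yields a member of $\mathcal I_\alpha$ whose interior contains $B$. Because the components are open, these properties assemble component-wise into the required properties of $\mathcal I$; in particular no member of $\mathcal I$ contains a whole $C_\alpha$, and $X\notin\mathcal I$.

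It remains to check that $Y$ is normal, which I expect to be the main obstacle and the place where the interior property is used. Given disjoint closed sets $E,F\subseteq Y$, at most one contains $p$. If neither does, both lie in $X$ and are closed there, and normality of $X$ separates them by open sets that stay open in $Y$. The delicate case is $p\in F$: then $E\subseteq X$ is closed, and since $p\notin E$ some basic neighbourhood of $p$ misses $E$, forcing $E$ into a member $B_0\in\mathcal I$. Using normality of $X$ I would first engulf $E$ in an open set $O$ with $\mathrm{cl}_X O\cap(F\cap X)=\emptyset$, then thicken $B_0$ via the interior property to $B_0'\in\mathcal I$ with $B_0\subseteq\mathrm{int}B_0'$, and set $B_1=\mathrm{cl}_X O\cap B_0'$. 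This $B_1\in\mathcal I$ satisfies $E\subseteq\mathrm{int}B_1$ and $B_1\cap(F\cap X)=\emptyset$, so $\mathrm{int}B_1$ and $\{p\}\cup(X\setminus B_1)$ are disjoint open sets separating $E$ and $F$. Together with the $T_1$ property already secured, this yields $T_4$ and completes the case $i=4$.
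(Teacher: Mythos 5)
Your proposal is correct, but your sufficiency construction runs on a genuinely different engine than the paper's. The necessity direction is identical in both (a compact component would be a non-empty proper clopen subset of $Y$), and both proofs build $Y=X\cup\{p\}$ by prescribing the neighborhood filter of $p$ component-wise, using local connectedness to make components clopen so that everything assembles componentwise. The difference lies in where the ``filter at infinity'' comes from and how normality is verified. The paper applies Zorn's lemma to obtain, for each (necessarily non-compact) component $C$, a maximal collection $\mathscr{D}(C)$ of closed subsets of $C$ with the finite intersection property and empty intersection; neighborhoods of $p$ must contain a member of each $\mathscr{D}(C)$. Maximality yields closure under finite intersections (hence the topology axioms), empty intersection yields the Hausdorff separation, and normality of $Y$ comes from a per-component application of normality of $X$ to the pair $(F\cap X)\cup A_C$ and $G$, followed by a union over components. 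You instead anchor $p$ at an actual point $q_\alpha\in\beta C_\alpha\setminus C_\alpha$ (available since a clopen subspace of a $T_4$-space is $T_4$, hence Tychonoff, and non-compactness makes the remainder non-empty), take the ideal $\mathcal{I}_\alpha$ of closed sets whose closure in $\beta C_\alpha$ misses $q_\alpha$, and isolate a single driving property --- your interior/thickening property --- proved from normality of the compact space $\beta C_\alpha$; one application of normality of $X$ then finishes. I checked your translation of connectedness into ``no member of $\mathcal{I}$ contains a component'' and your separation via $B_1=\mathrm{cl}_X O\cap B_0'$, and both are sound. What the paper's route buys is self-containedness (only Zorn's lemma, no compactification theory); what your route buys is conceptual clarity --- the added point is literally glued at points at infinity of the components --- and it is closer in spirit to the author's treatment of the Tychonoff case in \cite{Ko}. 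Like the paper, you settle only the case $i=4$ together with the easy direction for all $i$; the cases $i=2,3$ of the conjecture are untouched by both arguments.
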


\begin{theorem}\label{JJH}
A locally connected normal space has a one-point normal connectification if and only if it has no compact component.
\end{theorem}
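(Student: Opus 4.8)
The plan is to route both implications through the combinatorics of a single filter of open sets prescribing the neighborhoods of the added point. Writing $Y=X\cup\{p\}$, recall that a one-point extension topology on $Y$ that restricts to the given topology on $X$ is determined by a filter $\mathcal F$ of open subsets of $X$: the open sets of $Y$ are the open subsets of $X$ together with the sets $\{p\}\cup U$ for $U\in\mathcal F$. One checks routinely that $X$ is then a subspace, that $X$ is dense in $Y$ iff $\emptyset\notin\mathcal F$, that $Y$ is $T_1$ iff $X\setminus\{x\}\in\mathcal F$ for every $x\in X$, and --- using that in a locally connected space the clopen sets are exactly the unions of components --- that $Y$ is connected iff every member of $\mathcal F$ meets every component of $X$. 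So the whole problem becomes the construction (or obstruction) of a suitable $\mathcal F$.

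For necessity, suppose $Y$ is a one-point normal connectification and, toward a contradiction, that $C$ is a compact component of $X$. Local connectedness makes $C$ open in $X$, so $C=V\cap X$ for some open $V\subseteq Y$; compactness together with the Hausdorffness of $Y$ makes $C$ closed in $Y$. Since $X$ is dense, $\overline V=\overline{V\cap X}=\overline C=C$, whence $V=C$ and $C$ is clopen in $Y$. As $C$ is nonempty and omits $p$, this contradicts connectedness of $Y$.

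For sufficiency I would build $\mathcal F$ componentwise. Each component $C_\lambda$ is clopen, hence a closed subspace of the normal $T_1$ space $X$, so it is itself normal and $T_1$, thus Tychonoff; having no compact component means each $C_\lambda$ is non-compact, so $\beta C_\lambda\setminus C_\lambda\neq\emptyset$. Fix $\xi_\lambda\in\beta C_\lambda\setminus C_\lambda$ and let $\mathcal G_\lambda$ be the (open members of the) trace on $C_\lambda$ of the neighborhood filter of $\xi_\lambda$, generated by the sets $W\cap C_\lambda$ with $W$ open in $\beta C_\lambda$ and $\xi_\lambda\in W$. Density of $C_\lambda$ in $\beta C_\lambda$ gives $\emptyset\notin\mathcal G_\lambda$; Hausdorffness gives $C_\lambda\setminus\{x\}\in\mathcal G_\lambda$ for all $x$; and --- the decisive point --- regularity of the compact Hausdorff space $\beta C_\lambda$ yields the shrinking property that for every $O\in\mathcal G_\lambda$ there is $U\in\mathcal G_\lambda$ with $\overline U\subseteq O$ (take $W'\ni\xi_\lambda$ with $\overline{W'}\subseteq W$ and set $U=W'\cap C_\lambda$). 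I then set $\mathcal F=\{U\subseteq X\text{ open}:U\cap C_\lambda\in\mathcal G_\lambda\text{ for every }\lambda\}$. This is a proper filter, each of its members meets every component, and $X\setminus\{x\}\in\mathcal F$; hence $Y$ is $T_1$, connected, and contains $X$ as a dense subspace.

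The heart of the matter is normality of $Y$, and this is the step I expect to be the main obstacle. Disjoint closed sets neither of which contains $p$ are separated directly by normality of $X$. The essential case is a pair $F_1=\{p\}\cup G$ and $F_2\subseteq X$ with $G,F_2$ disjoint and closed in $X$ and $X\setminus F_2\in\mathcal F$. Working in each component, $O_\lambda=C_\lambda\setminus(F_2\cap C_\lambda)\in\mathcal G_\lambda$, so the shrinking property produces $U'_\lambda\in\mathcal G_\lambda$ with $\overline{U'_\lambda}\subseteq O_\lambda$, while normality of $C_\lambda$ produces an open $U^0_\lambda\supseteq G\cap C_\lambda$ with $\overline{U^0_\lambda}\subseteq O_\lambda$; then $U_\lambda=U'_\lambda\cup U^0_\lambda$ lies in $\mathcal G_\lambda$, contains $G\cap C_\lambda$, and has closure missing $F_2$. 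Setting $U=\bigcup_\lambda U_\lambda$, the sets $\{p\}\cup U$ and $X\setminus\overline U$ are the required disjoint open neighborhoods of $F_1$ and $F_2$. The reason this succeeds --- and the conceptual crux --- is that each $\mathcal G_\lambda$ is a \emph{round} filter admitting interior-nested members, a property that a naive co-small filter (e.g.\ the co-compact or co-finite filter) fails precisely when components are not locally compact, and which is exactly what passage through the compact Hausdorff space $\beta C_\lambda$ supplies.
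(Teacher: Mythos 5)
Your proof is correct, but the core of your sufficiency argument runs on genuinely different machinery than the paper's. The necessity direction is essentially the paper's (you use density of $X$ to upgrade openness of $C$ in $X$ to openness in $Y$, where the paper uses $T_1$-ness of $Y$; both are one line). For sufficiency, the paper constructs, for each component $C$, a maximal collection $\mathscr{D}(C)$ of closed subsets of $C$ with the finite intersection property and empty intersection (via Zorn's lemma), declares the neighborhoods of $p$ to be those open sets containing a member of each $\mathscr{D}(C)$, and then runs all separation arguments through normality/regularity of $X$ applied against these closed sets: maximality supplies a member of $\mathscr{D}(C)$ avoiding any given point, and normality of $X$ separates $(F\cap X)\cup A_C$ from $G$ componentwise. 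You instead pick a point $\xi_\lambda\in\beta C_\lambda\setminus C_\lambda$ and take the trace of its neighborhood filter, and the property you isolate and exploit is \emph{roundness} (every member contains the closure of another member), inherited from regularity of $\beta C_\lambda$; normality of $Y$ then follows from roundness plus normality of each component. The two constructions are secretly the same object --- a maximal FIP family of closed sets is a free closed ultrafilter, i.e.\ a point of the Wallman compactification, which coincides with $\beta C_\lambda$ when $C_\lambda$ is normal --- but the proofs lean on different structural features: the paper on maximality, you on roundness. What the paper's route buys is self-containment (only Zorn's lemma, no compactification theory, and no need to know components are Tychonoff); what your route buys is a cleaner conceptual diagnosis, since roundness is exactly the property that a naive free filter lacks and exactly what normality of the extension requires, and it makes transparent why the filter-of-open-sets framework reduces the whole theorem to finding one round free filter per component.

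Two small points you should make explicit. First, in the last step you need $F_2\cap\overline{U}=\emptyset$ where $U=\bigcup_\lambda U_\lambda$; this requires $\overline{\bigcup_\lambda U_\lambda}=\bigcup_\lambda\overline{U_\lambda}$, which is not automatic for infinite unions but holds here because each $U_\lambda$ sits inside the component $C_\lambda$ and components are \emph{open} (local connectedness) --- a point $x\in\overline{U}$ lies in some open $C_\mu$, hence in $\overline{U\cap C_\mu}=\overline{U_\mu}$. (The paper sidesteps this by writing the second separating set as a union of componentwise pieces rather than as a complement of a closure.) Second, since the paper's ``normal'' means $T_4$, you should note that your verification of $T_1$ together with the closed-set separation yields Hausdorffness of $Y$, so the extension is normal in the intended sense; the paper instead checks Hausdorffness directly before checking the separation of closed sets.
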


\begin{proof}
Let $X$ be a locally connected normal space.

Suppose that $X$ has a one-point normal connectification $Y$. Suppose to the contrary that $X$ has a compact component $C$. Then $C$ is closed in $Y$, as $C$ is compact and $Y$ is Hausdorff. On the other hand, $C$ is also open in $Y$, as $C$ is open in $X$, since $X$ is locally connected, and $X$ is open in $Y$, since $Y$ is a $T_1$-space. Therefore $C$ is a non-empty proper simultaneously closed and open subset of $Y$, which contradicts connectedness of $Y$.

Now, suppose that $X$ has no compact component. First, we show that for every component $C$ of $X$ there is a collection $\mathscr{D}(C)$ of closed sets in $C$ (and therefore in $X$, as components of a space are always closed) with the finite intersection property such that the intersection of the elements of $\mathscr{D}(C)$ is empty, and $\mathscr{D}(C)$ is maximal with respect to this property.

Let $C$ be a component of $X$. Let $\Delta$ be the family consisting of all collections $\mathscr{D}$ of closed sets in $C$ with the finite intersection property and empty intersection. We partially order $\Delta$ with set-theoretic inclusion $\subseteq$. Note that $\Delta$ is non-empty, as by our assumption $C$ is non-compact. To show that $\Delta$ has a maximal element, by Zorn's lemma, it suffices to check that every linearly ordered subfamily of $\Delta$ has an upper bound in $\Delta$. But it is easy to check that for any linearly ordered subfamily $\Lambda$ of $\Delta$ the union $\bigcup\Lambda$ is in $\Delta$. This shows the existence of $\mathscr{D}(C)$.

It follows from the maximality of $\mathscr{D}(C)$ that for any finite subcollection $\mathscr{F}$ of $\mathscr{D}(C)$ the intersection $\bigcap\mathscr{F}$ is also in $\mathscr{D}(C)$, as $\mathscr{D}(C)\cup\{\bigcap\mathscr{F}\}$ is a collection of closed sets in $C$ with the finite intersection property and empty intersection which contains $\mathscr{D}(C)$.

We now proceed with the construction of a one-point connectification for $X$. Take some $p$ outside $X$ and let $Y=X\cup\{p\}$. Let the topology $\mathscr{T}$ of $Y$ consists of (I) all open sets of $X$, together with (II) all subsets $U$ of $Y$ such that
\begin{enumerate}
  \item $p$ is in $U$;
  \item $U\cap X$ is open in $X$;
  \item $U$ contains an element from $\mathscr{D}(C)$ for each component $C$ of $X$.
\end{enumerate}
We verify that this indeed defines a topology on $Y$.

The empty set is of type (I) and the whole set $Y$ is of type (II). We check that the intersection of any two elements of $\mathscr{T}$ is in $\mathscr{T}$. It is clear that the intersection of any two elements of type (I) is of type (I), and the intersection of an element of type (I) with an element of type (II) is of type (I). Let $U$ and $V$ be elements of type (II). It is clear that $p$ is in $U\cap V$ and $(U\cap V)\cap X$ is open in $X$. For a component $C$ of $X$ there are elements $A$ and $B$ of $\mathscr{D}(C)$ contained in $U$ and $V$, respectively. Now, $A\cap B$ is an element of $\mathscr{D}(C)$, as $\mathscr{D}(C)$ has any finite intersection of its elements, and $U\cap V$ contains $A\cap B$. Thus $U\cap V$ is of type (II). The fact that the union of any number of elements of $\mathscr{T}$ is in $\mathscr{T}$ may be checked analogously. Therefore $\mathscr{T}$ is a topology on $Y$.

Note that $X$ is a subspace of $Y$, as any open set in $X$ is of type (I) and any open set in $Y$ intersected with $X$ is open in $X$. Also, $X$ is dense in $Y$, as any neighborhood of $p$ in $Y$ contains an element from $\mathscr{D}(C)$ for a component $C$ of $X$, and elements of $\mathscr{D}(C)$ are necessarily non-empty.

We check that $Y$ is connected. Suppose otherwise that there is a separation $U$ and $V$ for $Y$. Without any loss of generality we may assume that $p$ is in $U$. But then $U$ intersects any component $C$ of $X$, as it contains an element from $\mathscr{D}(C)$, and thus $U$ contains the whole $C$, as $C$ is connected. This implies that $U=Y$, which is a contradiction.

To conclude the proof we check that $Y$ is a normal space. First, we check that $Y$ is a Hausdorff space. Let $y$ and $z$ be distinct elements in $Y$. First, suppose that $y$ and $z$ are both in $X$. Then $y$ and $z$ are separated by disjoint sets which are open in $X$, and therefore open in $Y$. Next, suppose that either $y$ or $z$, say $y$, is $p$. Then $z$ is in $X$. Let $C$ be the component of $X$ which contains $z$. Note that components of $X$ are open in $X$, as $X$ is locally connected. There is an element $A$ of $\mathscr{D}(C)$ which does not contain $z$. Since $X$ is regular, there are disjoint open sets $U$ and $V$ in $X$ containing $z$ and $A$, respectively. The sets $(Y\setminus C)\cup V$ and $U\cap C$ are open in $Y$, are disjoint, and contain $y$ and $z$, respectively.

Next, we check that disjoint closed sets in $Y$ are separated by disjoint open sets in $Y$. Let $F$ and $G$ be disjoint closed sets in $Y$. First, suppose that neither $F$ nor $G$ contains $p$. Then $F$ and $G$ are disjoint closed sets in $X$, and therefore, since $X$ is normal, are separated by disjoint sets which are open in $X$, and thus open in $Y$. Next, suppose that either $F$ or $G$, say $F$, contains $p$. Then $X\setminus G$ is a neighborhood of $p$ in $Y$. For a component $C$ of $X$ let $A_C$ be an element of $\mathscr{D}(C)$ contained in $X\setminus G$. Then $(F\cap X)\cup A_C$ and $G$ are disjoint closed sets in $X$, and therefore, since $X$ is normal, are separated by disjoint open sets in $X$. Let $U_C$ and $V_C$ be disjoint open sets in $X$ containing $(F\cap X)\cup A_C$ and $G$, respectively. The sets
\[\{p\}\cup\bigcup\{C\cap U_C:\mbox{$C$ is a component of $X$}\}\]
and
\[\bigcup\{C\cap V_C:\mbox{$C$ is a component of $X$}\}\]
are disjoint open sets in $Y$ containing $F$ and $G$, respectively.
\end{proof}

We conclude with a question whose statement is indeed motivated by Conjecture \ref{JHGH}.

\begin{question}\label{HGF}
Determine those topological properties $\mathscr{P}$ for which the following holds true: A locally connected space with $\mathscr{P}$ has a one-point connectification with $\mathscr{P}$ if and only if it has no compact component.
\end{question}

Observe that by the theorem of M. Abry, J.J. Dijkstra, and J. van Mill in \cite{ADvM} the statement in the above question holds true when $\mathscr{P}$ is separable metrizability.

\section*{Acknowledgements}

This research was in part supported by a grant from IPM.


\end{document}